\title{Klt varieties of general type with small volume}
\author{Burt Totaro and Chengxi Wang}
\date{  }
\def\N{\text{\bf N}}
\def\Z{\text{\bf Z}}
\def\Q{\text{\bf Q}}
\def\C{\text{\bf C}}
\def\P{\text{\bf P}}
\DeclareMathOperator{\vol}{vol}
\def\d{\widetilde{d}}
\begin{document}
\maketitle
\newtheorem{theorem}{Theorem}[section]
\newtheorem{corollary}[theorem]{Corollary}
\newtheorem{lemma}[theorem]{Lemma}

\theoremstyle{definition}
\newtheorem{definition}[theorem]{Definition}
\newtheorem{example}[theorem]{Example}

\theoremstyle{remark}
\newtheorem{remark}[theorem]{Remark}

For projective varieties of general type,
the {\it volume }measures the asymptotic growth
of the plurigenera: $\vol(X)=\limsup_{m\to\infty} h^0(X,mK_X)/(m^n/n!)$.
This is equal to the intersection number $K_X^n$
if the canonical class $K_X$ is ample.
A central fact
about the classification
of algebraic varieties is the theorem of Hacon-M\textsuperscript{c}Kernan-Xu,
which says in particular:
for mildly singular (klt) complex projective varieties $X$
with ample canonical class,
there is a constant
$s_n$ depending only on the dimension $n$ of $X$
such that the pluricanonical linear system
$|mK_X|$ gives a birational embedding of $X$ into projective
space for all $m\geq s_n$ \cite[Theorem 1.3]{HMXacc}.
It follows that there is a positive lower bound
$v_n$ for the volume of all klt $n$-folds with ample canonical class:
namely, $1/(s_n)^n$ is a lower bound. 
It is a fundamental
problem to find the optimal values of these constants.

We focus here on constructing klt varieties of general type
with small volume. (We also construct klt Fano varieties
with similar exotic behavior.)
It is also interesting to look for small volumes
in the narrower setting of varieties with canonical singularities
and ample canonical class,
since these arise as canonical models of smooth projective varieties
of general type. In that direction,
Ballico, Pignatelli, and Tasin constructed
smooth projective $n$-folds of general type
with volume about $1/n^n$ \cite[Theorems 1 and 2]{BPT}.
After several advances, Esser and the authors
constructed smooth projective $n$-folds
of general type with volume about
$1/2^{2^{n/2}}$ \cite[Theorem 1.1]{ETW}.
That paper also gives comparably
extreme examples of Fano and Calabi-Yau varieties.
Returning to the klt setting, our examples here have volume
roughly $1/2^{2^n}$.
These examples should actually be close to optimal,
by the following discussion.

In the more general context of klt pairs,
Koll\'ar proposed what may be the klt pair $(Y,\Delta)$
of general type with standard coefficients that has minimum volume
\cite[Introduction]{HMXbir}.
(Here ``general type'' means that $K_Y+\Delta$
is big, and ``standard coefficients'' means
that all coefficients of the $\Q$-divisor $\Delta$
are of the form $1-1/m$ for $m\in\Z^{+}$.)
There is some positive lower bound for such volumes,
and the minimum is attained,
by Hacon-M\textsuperscript{c}Kernan-Xu's theorem
that these volumes satisfy DCC \cite[Theorem 1.3]{HMXacc}.
The example is
$$(Y,\Delta)=\bigg( \P^n,\frac{1}{2}H_0+\frac{2}{3}H_1
+\frac{6}{7}H_2+\cdots+\frac{c_{n+1}-1}{c_{n+1}}H_{n+1}\bigg),$$
where $H_0,H_1,\ldots,H_{n+1}$ are $n+2$ general hyperplanes
and $c_0,c_1,c_2,\ldots\,$ is Sylvester's sequence,
$$c_0=2\text{ and }c_{m+1}=c_m(c_m-1)+1.$$
In this case, the volume of $K_Y+\Delta$ is $1/(c_{n+2}-1)^n$,
which is really small, less than $1/2^{2^n}$.
The optimality of Koll\'ar's example is known only in dimension 1,
where it is the ``Hurwitz orbifold'' of volume $1/42$
\cite[section 10]{AL}.

How small can the volume be for a klt variety with ample
canonical class, as opposed to a klt pair?
In dimension 2, Alexeev and Liu gave an example 
with volume $1/48983$ \cite[Theorem 1.4]{AL}.
In high dimensions, we give examples as follows (Theorems \ref{kltr=3}
and \ref{kltgeneral}). Following a long tradition in algebraic
geometry \cite{Iano-Fletcher, JK, BPT, BK},
our examples are weighted projective hypersurfaces.
These exhibit a huge range of behavior, and finding good examples
is not easy.

\begin{theorem}
\label{kltintro}
For every integer $n\geq 2$,
there is a complex klt $n$-fold $X$ with ample canonical class such that
$\vol(K_X)<1/2^{2^n}$. More precisely,
$\log(\vol(K_X))$ is asymptotic to $\log(\vol(K_Y+\Delta))$
as the dimension goes to infinity,
where $(Y,\Delta)$ is Koll\'ar's klt pair above.
\end{theorem}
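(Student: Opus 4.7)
The plan is to realize $X$ as a general quasi-smooth hypersurface of some degree $d$ in a weighted projective space $\P(a_0,\ldots,a_{n+1})$. For such $X$, the canonical bundle formula gives $K_X=\mathcal{O}_X(d-\sum_i a_i)$ and $\vol(K_X)=d(d-\sum_i a_i)^n/\prod_i a_i$. To make $K_X$ ample while the volume is tiny, I would force $d-\sum_i a_i=1$, so that $K_X=\mathcal{O}_X(1)$ and $\vol(K_X)=d/\prod_i a_i$. The problem then becomes combinatorial: choose weights $a_0,\ldots,a_{n+1}$ with $d=1+\sum_i a_i$ for which $\prod_i a_i$ is doubly exponentially large in $n$ and a general degree-$d$ hypersurface is quasi-smooth and well-formed.

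To push $\prod_i a_i$ up to the size of $(c_{n+2}-1)^n$, I would build the weights greedily by a Sylvester-type recursion. Given partial weights $a_0,\ldots,a_k$, choose $a_{k+1}$ as large as possible subject to the constraint that the final condition $d-\sum_i a_i=1$ remains achievable with integer weights; the same arithmetic that makes Sylvester's sequence satisfy $c_m\sim E^{2^m}$ for a constant $E$ will then push $\prod_i a_i$ into the doubly exponential regime. The asymptotic equality $\log\vol(K_X)\sim\log\vol(K_Y+\Delta)$ should fall out automatically because both constructions solve essentially the same extremal problem --- maximize $\prod_i a_i$ subject to $\sum_i 1/a_i$ lying just below an integer.

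The main obstacle is verifying quasi-smoothness for the combinatorially rigid weight system produced by this recursion. I would apply the Iano-Fletcher/Johnson-Koll\'ar criterion: for each nonempty $I\subseteq\{0,\ldots,n+1\}$, a general degree-$d$ hypersurface is smooth along the coordinate stratum indexed by $I$ iff $I$ supports either a pure monomial of degree $d$ or a ``tangent monomial'' of degree $d$ involving one extra variable with exponent one. Because Sylvester-like weights are pairwise nearly coprime and wildly unequal in size, these subset-wise divisibility conditions can easily fail, and I expect the real work to consist of local perturbations of the recursion --- swapping in nearby integers, splitting a large weight into two, or inserting an auxiliary weight equal to $1$ --- that preserve $\prod_i a_i/d$ up to a subdominant factor while making every subset's monomial condition solvable. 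This is the step that will determine how closely the construction hugs Koll\'ar's bound, and it is what distinguishes the klt case (weaker singularity hypothesis, more freedom to perturb) from the canonical case treated in \cite{ETW}.

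Once quasi-smoothness is in place, well-formedness reduces to a short gcd check on the $a_i$, klt follows because a quasi-smooth well-formed weighted hypersurface in characteristic zero has only cyclic quotient singularities, and ampleness of $K_X$ follows from $d>\sum_i a_i$ together with ampleness of $\mathcal{O}_X(1)$. Comparing $\log(d/\prod_i a_i)$ with $-n\log(c_{n+2}-1)$ then yields the asymptotic claim; the explicit bound $\vol(K_X)<1/2^{2^n}$ holds for large $n$ by this comparison and can be confirmed by direct inspection in small dimensions via the explicit constructions of Theorems \ref{kltr=3} and \ref{kltgeneral}.
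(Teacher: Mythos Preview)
Your overall framework is right: the paper does build $X$ as a general quasi-smooth hypersurface in a weighted projective space with $K_X=\mathcal{O}_X(1)$, and the Sylvester sequence does drive the doubly exponential growth. But the proposal has a genuine gap at the point you yourself flag as ``the real work'': quasi-smoothness. Your plan there is to take a Sylvester-greedy sequence of weights and then fix the subset conditions by ad hoc perturbations (swapping integers, splitting weights, inserting a weight equal to $1$). This is not a proof, and in fact the paper's construction is not of this shape at all. A pure Sylvester-type choice of weights does \emph{not} satisfy the Iano-Fletcher criterion, and there is no reason to believe small perturbations will repair all $2^{n+2}$ subset conditions simultaneously while preserving the volume asymptotics.

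What the paper actually does is structurally different. The weights are split into two tiers. The last $n-r+2$ weights are chosen as $a_{r+i}=c_0\cdots\widehat{c_i}\cdots c_{n-r+1}\,x$, so each of them \emph{divides} $d$; any subset $I$ meeting this tier is then trivially handled. The first $r$ weights $a_0,\ldots,a_{r-1}$ are built from explicit polynomial recurrences (the sequences $f_i,e_i,b_i,z_i,d_i$ of section~\ref{poly}) in the variable $y=c_{n-r+2}-1$, engineered precisely so that
\[
d-a_{i}\equiv 0 \pmod{a_{i-1}} \quad (1\le i\le r-1), \qquad d-a_0\equiv 0 \pmod{a_{r-1}}.
\]
This ``cycle of congruences'' is exactly the hypothesis of Lemma~\ref{cycle}, a clean sufficient condition for quasi-smoothness that disposes of all remaining subsets at once. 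The well-formedness check is also not quite the ``short gcd check'' you suggest: it requires the somewhat delicate verification that $\gcd(a_0,\ldots,\widehat{a_m},\ldots,a_{r-1})=1$ for each $m$, which uses the specific congruences $f_i,b_i$ satisfy modulo $y$ and $y+1$ and is where the parity condition on $r$ enters. Finally, the asymptotic $\log\vol(K_X)\sim\log\vol(K_Y+\Delta)$ does not ``fall out automatically'' for fixed $r$; for each odd $r$ the ratio tends to $(2^r-1)/2^r$, and one must let $r\to\infty$ with $n$ to get the limit $1$. None of these ingredients is visible in your outline, and without the cycle criterion and the polynomial recurrences there is no actual construction.
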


Since Koll\'ar's example is conjecturally optimal in the broader
setting of klt pairs with standard coefficients, Theorem
\ref{kltintro} means that our klt varieties
with ample canonical class should be close
to optimal in high dimensions.
The details of the construction
are intricate, combining Sylvester's sequence
with several sequences of polynomials defined by recurrence
relations.

Finally, we construct a klt Fano variety $X$ in every dimension $n$
such that the linear system $|-mK_X|$ is empty for all $1\leq m< b$,
with $b$ doubly exponential in $n$ (Theorem \ref{kltfano}). More precisely,
$b$ is roughly $2^{2^n}$. (In the narrower setting of terminal
Fano varieties, Esser and the authors gave examples
with $b$ roughly $2^{2^{n/2}}$ \cite[Theorem 3.9]{ETW}.)
Birkar's theorem on the boundedness of complements implies that there
is an upper bound on the number of vanishing spaces of sections
$H^0(X,-mK_X)$, for all klt Fano varieties
of a given dimension \cite[Theorem 1.1]{Birkarcomp}.
Our examples show that the bound must grow extremely fast
as the dimension increases.

This work was supported by NSF grant DMS-1701237. We thank
the Simons Foundation for making Magma available
to U.S. universities.
Thanks to Jungkai Chen, Louis Esser,
and Miles Reid for their suggestions.

\section{Background on weighted projective spaces}

Some introductions to the singularities
of the minimal model program, such as terminal, canonical, or
Kawamata log terminal (klt),
are \cite{ReidYPG, KM}. We work over $\C$, although much of
the following would work in any characteristic.

For positive integers $a_0,\ldots,a_n$,
the weighted projective space $Y=P(a_0,\ldots,a_n)$
is said to be {\it well-formed }if $\gcd(a_0,\ldots,\widehat{a_j},
\ldots,a_n)=1$ for each $j$. We always assume this.
(In other words, the analogous
quotient stack $[(A^{n+1}-0)/G_m]$,
where the multiplicative group $G_m$ acts by
$t(x_0,\ldots,x_n)=(t^{a_0}x_0,\ldots,t^{a_n}x_n)$,
has trivial stabilizer group
in codimension 1.) For well-formed $Y$, the canonical divisor of $Y$
is given by $K_Y=O(-a_0-\cdots-a_n)$ \cite[Theorem 3.3.4]{Dolgachev}.
Here $O(m)$ is the reflexive sheaf
associated to a Weil divisor for any integer $m$;
it is a line bundle if and only if $m$ is a multiple of every
weight $a_i$. The intersection number $\int_Y c_1(O(1))^n$
is equal to $1/a_0\cdots a_n$. (To check this,
think of the intersection number as $\vol(O(1))$, and use that
the coordinate ring of $O(1)$ is the graded polynomial ring with generators
in degrees $a_0,\ldots,a_n$.)

Since weighted projective spaces have quotient singularities,
they are klt.
A closed subvariety $X$ of a weighted projective space $P(a_0,\ldots,a_n)$
is called {\it quasi-smooth }if its affine cone
in $A^{n+1}$ is smooth outside
the origin. It follows that $X$ has only cyclic quotient singularities.
A subvariety $X$ in $Y=P(a_0,\ldots,a_n)$
is said to be {\it well-formed }if $Y$ is well-formed and the codimension
of $X\cap Y^{\text{sing}}$ in $X$ is at least 2. Notably,
the adjunction formula holds for a well-formed
quasi-smooth hypersurface $X$ of degree $d$ in $Y$, meaning
that $K_X=O_X(d-\sum a_i)$
\cite[section 6.14]{Iano-Fletcher}. A general hypersurface
of degree $d$ is well-formed if and only if $\gcd(a_0,\ldots,
\widehat{a_i},\ldots,\widehat{a_j},\ldots,a_n)|d$ for all $i<j$
\cite[section 6.10]{Iano-Fletcher};
that holds for all examples in this paper. Indeed, assuming 
that $d$ is not equal to any $a_i$ (as will be true
in our examples), a quasi-smooth
hypersurface of dimension at least 3 is always
well-formed \cite[Theorem 6.17]{Iano-Fletcher}.

Iano-Fletcher proved the following criterion for quasi-smoothness,
using that we are in characteristic zero
\cite[Theorem 8.1]{Iano-Fletcher}. Here $\N$ denotes the natural numbers,
$\{0,1,\ldots\}$.

\begin{lemma}
\label{quasi}
A general hypersurface of degree $d$ in $P(a_0,\ldots,a_n)$
is quasi-smooth if and only if 

either (1) $a_i=d$ for some i,

or (2) for every nonempty subset $I$ of $\{0,\ldots,n\}$,
either (a) $d$ is an $\N$-linear combination of the numbers
$a_i$ with $i\in I$,
or (b) there are at least $|I|$ numbers $j\not\in I$ such that
$d-a_j$ is an $\N$-linear combination of the numbers $a_i$
with $i\in I$.
\end{lemma}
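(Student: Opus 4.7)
The plan is to realize quasi-smoothness as smoothness of the affine cone $C_f = \{f=0\} \subset A^{n+1}$ away from the origin, and then to check each torus stratum $L_I^* = \{p : x_i \neq 0 \Leftrightarrow i \in I\}$ separately, for nonempty $I \subseteq \{0,\ldots,n\}$. Setting $J = \{0,\ldots,n\}\setminus I$, at a point $p \in L_I^*$ one has $(\partial f/\partial x_i)(p) = (\partial f|_{L_I}/\partial x_i)(p)$ for $i \in I$, while for $j \in J$ the derivative $(\partial f/\partial x_j)(p)$ equals $g_j(p)$, where $g_j$ is the coefficient of $x_j$ in $f$ viewed as a polynomial in the variables indexed by $J$. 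This $g_j$ is weighted homogeneous in $x_I$ of degree $d-a_j$, and is nonzero for generic $f$ exactly when $d-a_j$ is an $\N$-combination of $\{a_i : i \in I\}$; denote by $J_1$ the set of such $j$, and note similarly that $f|_{L_I} \not\equiv 0$ iff (2a) holds for $I$.

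Case (1) is immediate: if $a_i = d$, a monomial $x_i^k m$ of weighted degree $d$ with $k \geq 2$ would require $\deg m = d(1-k) < 0$, so $f = c x_i + g$ for some polynomial $g$ in the remaining variables, and for generic $f$ the constant $c$ is nonzero, whence $\partial f/\partial x_i = c$ is nowhere vanishing.

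For sufficiency of (2), fix a nonempty $I$. If (a) holds, a monomial in $x_I$ of degree $d$ does not vanish on $L_I^*$, so the linear system of degree-$d$ polynomials is base-point-free on the smooth open $L_I^*$; Bertini then gives $\{f|_{L_I}=0\}$ smooth on $L_I^*$ for generic $f|_{L_I}$, ruling out singular points of $C_f$ on $L_I^*$. Suppose instead (a) fails but (b) holds; then $f|_{L_I} \equiv 0$, so $\partial f/\partial x_i$ vanishes automatically on $L_I^*$ for $i \in I$, and a singular point of $C_f$ in $L_I^*$ is precisely a common zero of $\{g_j : j \in J_1\}$. Let $V_j := \C[x_I]_{d-a_j}$, and consider the incidence variety $W$ of pairs $(f,p) \in (\text{parameter space}) \times L_I^*$ with $g_j(p)=0$ for every $j \in J_1$. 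Each $V_j$ contains a monomial nonvanishing on $L_I^*$, so every evaluation $V_j \to \C$ at a point of $L_I^*$ is surjective; thus the fibers of $W \to L_I^*$ have codimension $|J_1|$ in the parameter space, giving $\dim W = \dim(\text{params})+|I|-|J_1|$. The weighted homogeneity $g_j(t\cdot p) = t^{d-a_j}g_j(p)$ makes $W$ invariant under the $\C^*$-action on $L_I^*$ that scales $x_i$ by $t^{a_i}$; this action has $1$-dimensional orbits and commutes with the projection to the parameter space, so the image of $W \to (\text{params})$ has dimension at most $\dim W - 1$. When $|J_1| \geq |I|$ this is strictly less than $\dim(\text{params})$, so a generic $f$ has no singular point on $L_I^*$.

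For necessity, suppose neither (1) nor (2) holds, so there is a nonempty $I$ with (a) and (b) failing: $f|_{L_I} \equiv 0$ for every $f$, and $|J_1| \leq |I|-1$. For any $f$, the singularity condition on $L_I$ reduces to the common zero of $\{g_j : j \in J_1\}$, a system of at most $|I|-1$ polynomials in $|I|$ variables on $L_I \cong A^{|I|}$; by Krull's height theorem this zero locus has dimension at least $1$, hence contains points other than the origin, each of which is a singular point of $C_f$ in $A^{n+1}\setminus 0$. So no $f$ is quasi-smooth. The main obstacle is the sharpness of the sufficiency direction: the naive dimension count alone only rules out $|J_1| > |I|$, and it is the $\C^*$-scaling argument---shaving exactly one dimension off $W$---that reaches the lemma's sharp threshold $|J_1| \geq |I|$.
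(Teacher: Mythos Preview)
The paper does not give its own proof of this lemma; it simply attributes the result to Iano-Fletcher and cites \cite[Theorem~8.1]{Iano-Fletcher}. Your argument is a self-contained proof along the lines of the original, and it is essentially correct. Stratifying the punctured affine cone by the coordinate tori $L_I^*$, separating the $I$-partials (which see only $f|_{L_I}$) from the $J$-partials (which, on $L_I$, see only the linear-in-$x_J$ coefficients $g_j$), and then running Bertini for case~(a) and an incidence-variety dimension count for case~(b) is exactly the standard strategy. You have also correctly isolated the crux: the naive count in case~(b) only gives $|J_1|>|I|$, and it is the free $\C^*$-action on $L_I^*$ that shaves one dimension off the image of $W$ and reaches the sharp threshold $|J_1|\geq |I|$.

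Two places where the write-up could be tightened. In the Bertini step you are implicitly using generic smoothness in characteristic zero for the universal hypersurface over the parameter space; it is worth saying that the restriction map from the full parameter space to $\C[x_I]_d$ is surjective, so genericity upstairs passes to genericity of $f|_{L_I}$. In the necessity direction, the Krull bound produces a point of $\{g_j=0:j\in J_1\}$ in $L_I\setminus\{0\}$, which need not lie in the open stratum $L_I^*$; but as you have set things up (with $f|_{L_I}\equiv 0$ and $g_j\equiv 0$ for $j\notin J_1$), all $n+1$ partials of $f$ still vanish at any such point, so it is a singular point of $C_f$ in $A^{n+1}\setminus 0$ regardless. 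These are expository refinements rather than gaps.
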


\section{Klt varieties with ample canonical class}
\label{kltsectr=3}

As in the introduction, let $c_0,c_1,c_2,\ldots$ be Sylvester's sequence
\cite{OEIS},
$$c_0=2\text{ and }c_{n+1}=c_n(c_n-1)+1.$$
The first few terms are $c_0=2$, $c_1=3$, $c_2=7$, $c_3=43$, $c_4=1807$.
We give the following examples of klt varieties with ample canonical
class. We will generalize the construction as Theorem \ref{kltgeneral},
giving better but more complicated examples.

\begin{theorem}
\label{kltr=3}
Let $n$ be an integer at least $2$,
and define integers $a_0,\ldots,a_{n+1}$ as follows.
Let $y=c_{n-1}-1$ and
\begin{align*}
a_2&=y^3+y+1\\
a_1&=y(y+1)(1+a_2)-a_2\\
a_0&=y(1+a_2+a_1)-a_1.
\end{align*}
Let
$x=1+a_0+a_1+a_2$,
$d=yx=c_0\cdots c_{n-2}x=y^7+y^6+y^5+4y^4+2y^3+2y^2+2y$,
and $a_{i+3}=c_0\cdots \widehat{c_i}\cdots c_{n-2}x$ for $0\leq i\leq n-2$.
Let $X$ be a general hypersurface of degree $d$
in the complex weighted projective space $P(a_0,\ldots,a_{n+1})$.
Then $X$ is a klt projective variety of dimension $n$ with ample
canonical class, and
$$\vol(K_X)=\frac{1}{y^{n-3}x^{n-2}a_0a_1a_2}.$$
It follows that $\vol(K_X)<\frac{1}{(c_{n-1}-1)^{7n-1}}$
and hence $\vol(K_X)<\frac{1}{2^{2^n}}$.
\end{theorem}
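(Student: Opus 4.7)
The plan is to realize $X$ as a quasi-smooth well-formed hypersurface in the well-formed weighted projective space $Y = P(a_0,\ldots,a_{n+1})$, apply the adjunction formula $K_X = O_X(d - \sum a_i)$, and then compute. The first step I would carry out is the key numerical identity: the weights sum to $d - 1$. Using the classical fact $c_{n-1} - 1 = c_0 c_1 \cdots c_{n-2}$, we have $y = c_0\cdots c_{n-2}$ and $a_{i+3} = (y/c_i)x$ for $0 \le i \le n-2$. Combined with Sylvester's identity $\sum_{i=0}^{n-2} 1/c_i = 1 - 1/(c_{n-1}-1)$, this gives
\[
\sum_{i=3}^{n+1} a_i \;=\; yx\Bigl(1 - \tfrac{1}{y}\Bigr) \;=\; (y-1)x.
\]
Since $a_0+a_1+a_2 = x-1$ by definition of $x$, this forces $\sum_i a_i = yx - 1 = d-1$, so adjunction will give $K_X = O_X(1)$, which is ample on $X$ because $O_Y(1)$ is. The volume formula will then follow from $c_1(O_Y(1))^{n+1} = 1/\prod a_i$ and $[X] = d\cdot c_1(O_Y(1))$:
\[
K_X^n \;=\; \frac{d}{\prod_i a_i} \;=\; \frac{yx}{a_0 a_1 a_2 \cdot y^{n-2} x^{n-1}} \;=\; \frac{1}{y^{n-3} x^{n-2} a_0 a_1 a_2}.
\]

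Next I would check the hypotheses needed for adjunction. The ambient $Y$ is well-formed by direct inspection of the weights, and since $d = yx > a_i$ for every $i$, any quasi-smooth hypersurface of dimension $\ge 3$ in $Y$ is automatically well-formed by \cite[Theorem 6.17]{Iano-Fletcher}; the case $n = 2$ would be handled by a separate check that the general $X$ avoids the finitely many singular points of $Y$. Quasi-smoothness also implies that $X$ has only cyclic quotient singularities, hence is klt, so everything reduces to verifying Iano-Fletcher's criterion, Lemma \ref{quasi}. This is the step I expect to be the main obstacle.

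The verification of Lemma \ref{quasi} proceeds by cases on the subset $I \subseteq \{0,\ldots,n+1\}$. Since $d = c_i \cdot a_{i+3}$ for $0 \le i \le n-2$, any $I$ meeting $\{3,\ldots,n+1\}$ satisfies condition (a) of the lemma immediately. The hard cases are the seven subsets $I \subseteq \{0,1,2\}$, which are designed to be handled by the two identities built into the recurrences,
\begin{align*}
a_1 + a_2 &= y(y+1)(1 + a_2),\\
a_0 + a_1 &= y(1 + a_1 + a_2),
\end{align*}
together with the divisibility $a_j \mid d$ for $j \ge 3$. For each such $I$, the task is to exhibit $\N$-linear combinations expressing $d$, or $d - a_j$ for at least $|I|$ indices $j \notin I$, in the weights $\{a_i : i \in I\}$; the two identities above, read modulo the small weights, give exactly what is needed. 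This case analysis is combinatorially finite but is where the real content of the proof lies, since the weights $a_0, a_1, a_2$ are engineered precisely to pass it.

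For the final numerical bound, expanding the recurrences yields $a_2 > y^3$, $a_1 > y^5$, $a_0 > y^6$, and hence $x = 1 + a_0 + a_1 + a_2 > y^6$. Multiplying,
\[
y^{n-3}\, x^{n-2}\, a_0 a_1 a_2 \;>\; y^{(n-3) + 6(n-2) + 14} \;=\; y^{7n-1},
\]
so $\vol(K_X) < 1/(c_{n-1}-1)^{7n-1}$. An induction from $c_{m+1} - 1 = c_m(c_m - 1) \ge (c_m-1)^2$ gives $c_{n-1} - 1 \ge 2^{2^{n-2}}$ for $n \ge 2$, so
\[
y^{7n-1} \;\ge\; 2^{(7n-1)\, 2^{n-2}} \;>\; 2^{2^n},
\]
which completes the proof.
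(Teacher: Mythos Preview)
Your plan is correct and tracks the paper's proof closely: well-formedness, quasi-smoothness via Iano-Fletcher, adjunction to get $K_X=O_X(1)$, then the volume computation and bounds. Two refinements are worth flagging. First, instead of a raw case analysis over the seven subsets $I\subseteq\{0,1,2\}$, the paper isolates a clean sufficient condition (Lemma~\ref{cycle}): quasi-smoothness holds once the weights $a_3,\ldots,a_{n+1}$ divide $d$ and the first three satisfy the \emph{cycle} of congruences $a_0\mid d-a_1$, $a_1\mid d-a_2$, $a_2\mid d-a_0$, which are verified as exact identities $d-a_1=(y+1)a_0$, $d-a_2=(y^2+1)a_1$, $d-a_0=(y^4+3y-1)a_2$. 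Your two defining relations yield the first two of these immediately, but the third requires a separate (short) computation you did not mention; without it the case $I=\{2\}$ is not covered. Second, ``direct inspection'' undersells the well-formedness step: the paper spends several paragraphs proving that $a_0,a_1,a_2$ are pairwise coprime, by tracking the recurrences modulo hypothetical common prime factors and reducing to divisibility by $y$ or $y+1$. Your final inequality via $c_{n-1}-1\ge 2^{2^{n-2}}$ and $(7n-1)2^{n-2}>2^n$ is actually more elementary than the paper's appeal to the asymptotic $c_i\sim c^{2^{i+1}}$.
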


This example is not optimal, but it should be fairly close
to optimal, given the fast-growing functions involved.
Indeed, Koll\'ar's conjecturally optimal klt pair $(Y,\Delta)$
from the introduction has $\vol(K_Y+\Delta)=1/(c_{n+2}-1)^n
\doteq 1/(c_{n-1}-1)^{8n}$, while the klt variety $X$ in Theorem \ref{kltr=3}
has $\vol(K_X)\doteq 1/(c_{n-1}-1)^{7n-1}$, thus about
the $7/8$th power of the volume of Koll\'ar's klt pair.
See Theorem \ref{kltgeneral} for a generalization, producing
better examples.

Some cases of Theorem \ref{kltr=3} in low dimensions,
klt varieties with ample canonical class, are:

$X_{316}\subset P^3(158,85,61,11)$ of dimension 2, with volume
$2/57035\doteq 3.5 \times 10^{-5}$.

$X_{340068}\subset P^4(170034,113356,47269,9185,223)$ of dimension 3,
with volume
$$1/5487505331993410\doteq 1.8\times 10^{-16}.$$

The klt 4-fold with ample canonical class
given by Theorem \ref{kltr=3} has volume about $1.4\times
10^{-44}$. For comparison, the smallest known volume
for a klt 4-fold with ample canonical class is about $1.4\times 10^{-47}$
\cite[ID 538926]{GRD}.

\begin{proof}
Sylvester's sequence satisfies $c_{m}=c_0\cdots c_{m-1}+1$.
It follows that any two terms in the sequence are relatively prime.
Another notable feature is that
$$\frac{1}{2}+\frac{1}{3}+\frac{1}{7}+\cdots+\frac{1}{c_{m}}
=1-\frac{1}{c_{m+1}-1},$$
which converges very quickly to 1 as $m$ increases.

We first show that the weighted projective space
$Y=P(a_0,\ldots,a_{n+1})$ is well-formed.
That is, we have to show
that $\gcd(a_0,\ldots,\widehat{a_m},\ldots,a_{n+1})=1$
for each $0\leq m\leq n+1$.
It suffices to show that $a_0,a_1,a_2$
are pairwise relatively prime.

Indeed, 1 is a $\Z[y]$-linear combination of any two
of $a_0,a_1,a_2$. For clarity,
however, let us check by hand that $a_0,a_1,a_2$
are pairwise relatively prime.
To show that $\gcd(a_1,a_2)=1$,
let $p$ be a prime number dividing $a_1$ and $a_2$. By the formula
for $a_1$, $p$ divides $y(y+1)$. 
But as a polynomial in $y$, $a_2(y)=y^3+y+1$
satisfies $a_2(0)=1$
and $a_2(-1)=-1$; so $a_2\equiv 1\pmod{y}$ and $a_2\equiv -1\pmod{y+1}$.
This contradicts that $p$ divides
$a_2$. So $\gcd(a_1,a_2)=1$.

Next, let $p$ be a prime number that divides $a_0$
and $a_1$. By the formula for $a_0$, either $p$ divides
$1+a_2$ or $p$ divides $y$. In both cases, the formula
for $a_1$ gives that $p$ divides $a_2$, contradicting
that $\gcd(a_1,a_2)=1$. So $\gcd(a_0,a_1)=1$.

Finally, let $p$ be a prime number that divides $a_0$
and $a_2$. Then the formulas for $a_0$ and $a_1$
imply that $a_1\equiv y(a_1+1)\pmod{p}$, that is, $(y-1)a_1+y
\equiv 0\pmod{p}$, and $a_1\equiv y(y+1)\pmod{p}$. Combining these
shows that $(y-1)y(y+1)+y=y^3\equiv 0\pmod{p}$.
So $p$ divides $y$.
But then $a_1\equiv y(y+1)\equiv 0\pmod{p}$, contradicting
that $\gcd(a_1,a_2)=1$. It follows that the weighted
projective space $Y$ is well-formed.

Next, let us show that the general hypersurface $X$ of degree $d$
in $Y$ is quasi-smooth. We use the following sufficient condition
in terms of a cycle of congruences.

\begin{lemma}
\label{cycle}
For positive integers $d$ and $a_0,\ldots,a_{n+1}$,
a general hypersurface of degree $d$ in $P(a_0,\ldots,a_{n+1})$
is quasi-smooth if $d\geq a_i$ for every $i$
and there is a positive integer $r$ such that:

(1) $a_i|d$ if $i\geq r$ (that is, all but the first $r$ weights
divide $d$),

and (2) $d-a_{r-1}\equiv 0
\pmod{a_{r-2}}$, \ldots, $d-a_1
\equiv 0\pmod{a_0}$, and $d-a_0\equiv 0\pmod{a_{r-1}}$.
\end{lemma}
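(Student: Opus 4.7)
The plan is to reduce the question to Iano-Fletcher's criterion (Lemma \ref{quasi}) by checking that condition (2)(a) or (2)(b) holds for every nonempty subset $I \subseteq \{0,\ldots,n+1\}$. Hypothesis (2) of the present lemma is designed to give two kinds of presentations of $d$ as a non-negative combination: first, for each $j \in \{0,\ldots,r-1\}$ the congruence $d - a_j \equiv 0 \pmod{a_{\tau(j)}}$, where $\tau(j) := j-1 \pmod{r}$, together with $d \geq a_j$, yields $d = a_j + k_j a_{\tau(j)}$ for some integer $k_j \geq 0$; second, for $i \geq r$ the divisibility $a_i \mid d$ gives $d = (d/a_i)\, a_i$. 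The map $\tau$ is a cyclic permutation of $\{0,\ldots,r-1\}$, and this cyclic structure is the whole point of the hypothesis.

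The first step is to dispose of the easy case. If $I$ contains some index $i \geq r$, then $d$ is a non-negative multiple of $a_i$, so alternative (a) of Lemma \ref{quasi} holds at once. So I would assume $I \subseteq \{0,\ldots,r-1\}$ from now on and split according to whether $I$ contains a cyclically adjacent pair $\{j,\tau(j)\}$.

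If such a pair exists, then $d = a_j + k_j a_{\tau(j)}$ is already an $\N$-linear combination of two elements of $\{a_i : i \in I\}$, so (a) holds. Otherwise $I$ is an ``independent set'' for the directed cycle defined by $\tau$, meaning no edge $j \to \tau(j)$ has both endpoints in $I$. In that situation the preimage $\tau^{-1}(I)$ has size $|I|$ (since $\tau$ is a bijection) and by the independence condition is disjoint from $I$; for every $j \in \tau^{-1}(I)$ we have $\tau(j) \in I$ and $d - a_j = k_j a_{\tau(j)}$, exhibiting the $|I|$ indices $j \notin I$ required for alternative (b).

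The main obstacle is really just making the bookkeeping in the independent-set case airtight: one has to observe that the hypothesis that no two cyclically consecutive indices both lie in $I$ translates, via the bijection $\tau$, into the exact statement $\tau^{-1}(I) \cap I = \emptyset$, and that $\lvert \tau^{-1}(I) \rvert = \lvert I \rvert$ matches the count demanded by Lemma \ref{quasi}(2)(b). Once this bijective count is in place, the result follows from Lemma \ref{quasi} with no further computation.
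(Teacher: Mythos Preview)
Your proof is correct and follows essentially the same approach as the paper: both reduce to Iano-Fletcher's criterion, dispose of indices $i\geq r$ by divisibility, and then treat $I\subseteq\{0,\ldots,r-1\}$ by splitting into the case where $I$ contains an edge of the $r$-cycle (yielding alternative (a)) versus the independent-set case (yielding alternative (b) via the preimage set $\tau^{-1}(I)$). The only difference is cosmetic: the paper phrases the argument in terms of a directed cycle graph, while you encode the same structure by the permutation $\tau$.
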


\begin{proof} Use Lemma \ref{quasi}.
We have something to prove for each nonempty subset $I$
of $\{0,\ldots,n+1\}$. If $I$ contains a number $i\geq r$,
then $a_i$ divides $d$ and we are done. Otherwise, $I$ is contained
in the set $S=\{0,\ldots,r-1\}$. Consider $S$ as the vertices
of a directed graph,
with arrows from $r-1$ to $r-2$ to \ldots to 0 to $r-1$; then $S$
is a directed cycle of length $r$. If $I$ contains two vertices $j,i$
with an edge from $j$ to $i$, then the congruence $d-a_j\equiv 0\pmod{a_i}$
implies that $d$ is an $\N$-linear combination of $a_i$ and $a_j$,
using that $d\geq a_j$, and we are done.

Otherwise, $I$ contains no edge
in the graph. Let $J$ be the set of vertices that point to some element
of $I$. We have $|J|=|I|$, and $J$ is disjoint from $I$ because
$I$ contains no edge. For each element $j\in J$ pointing to a vertex
$i\in I$, the congruence $d-a_j\equiv 0\pmod{a_i}$ implies that
$d-a_j$ is an $\N$-linear combination of the numbers $a_m$
with $m\in I$. This checks the condition of Lemma \ref{quasi}
for quasi-smoothness.
\end{proof}

Returning to the proof of Theorem \ref{kltr=3},
let us use Lemma \ref{cycle} to prove that the general hypersurface $X$
of degree $d$ in $Y$ is quasi-smooth.
We know that $a_i$ divides $d$ for each $i\geq 3$; also,
$d$ is greater than every $a_i$. Given that, it suffices
to prove the cycle of 3 congruences: $d-a_2\equiv 0\pmod{a_1}$,
$d-a_1\equiv 0\pmod{a_0}$, and $d-a_0\equiv 0\pmod{a_2}$.
Using that $x=1+a_0+a_1+a_2$ and $d=yx$,
we compute that
\begin{align*}
d-a_2&=(y^2+1)a_1\\
d-a_1&=(y+1)a_0\\
d-a_0&=(y^4+3y-1)a_2,
\end{align*}
proving the desired
congruences. That completes
the proof that $X$ is quasi-smooth.
In particular, $X$ has only cyclic quotient singularities,
and so $X$ is klt.

Since $Y$ is well-formed and $X$ is quasi-smooth,
$K_X=O_X(d-\sum a_i)$. Here $d-\sum_{i=3}^{n+1}a_i
=c_0\cdots c_{n-2}(1-\sum_{i=0}^{n-2}1/c_i)x=x=1+a_0+a_1
+a_2$, and so $K_X=O_X(1)$. As a result,
\begin{align*}
\vol(K_X)&=\frac{d}{a_0\cdots a_{n+1}}\\
&=\frac{(c_0\cdots c_{n-2})x}
{(c_0\cdots c_{n-2})^{n-2}x^{n-1}
a_0a_1a_2}\\
&=\frac{1}{y^{n-3}x^{n-2}a_0
a_1a_2}.
\end{align*}

In terms of $y=c_{n-1}-1$, we have $a_2=y^3+y+1>y^3$,
$a_1=y^5+y^4+3y^2+y-1>y^5$,
$a_0=y^6+3y^3-y^2+1>y^6$,
and $x=y^6+y^5+y^4+4y^3+2y^2+2y+2>y^6$.
Therefore, $\vol(K_X)<1/y^{7n-1}=1/(c_{n-1}-1)^{7n-1}$.

There is a constant $c\doteq 1.264$ such that $c_i$ is the closest
integer to $c^{2^{i+1}}$ for all $i\geq 0$ \cite[equations
2.87 and 2.89]{GK}.
This implies the crude statement that $\vol(K_X)< 1/2^{2^n}$
for all $n\geq 2$.
\end{proof}

\section{Some polynomial sequences defined by recurrence
relations}
\label{poly}

Here we define five sequences of polynomials in $\Z[y]$ by recurrence
relations, $f_i$, $e_i$, $b_i$, $z_i$ and $d_i$.
These will be used for defining our examples of klt varieties
with ample canonical class in Theorem \ref{kltgeneral}, generalizing
Theorem \ref{kltr=3}.
It would be interesting to know
if these polynomials (such as $f_i$, below) have been encountered before.

\begin{definition}
\label{fpoly}
For each $i\geq 0$, define a polynomial $f_i$ in $\Z[y]$ by:
$f_0=y+1$, $f_1=y^2+1$,
and $f_i = f_{i-1}f_{i-2} + (f_{i-1}-1)(f_{i-1}-2)$ for $i\geq 2$.
\end{definition}

For example, $f_0=y+1$, $f_1=y^2+1$, and $f_2=y^4+y^3+y+1$. Clearly
the polynomial $f_i$ has degree $2^i$ for each $i\geq 0$.
The following description of $f_i$ may seem more natural.

\begin{lemma}
\label{flem}
For all $i\geq 0$,
$$f_i=1+y(f_0\cdots f_{i-1}-f_0\cdots f_{i-2}+\cdots+(-1)^i).$$
\end{lemma}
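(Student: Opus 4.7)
My plan is a direct induction on $i$. Let $S_i$ denote the inner alternating sum on the right-hand side, so $S_0=1$ and $S_i = f_0\cdots f_{i-1} - S_{i-1}$ for $i\geq 1$; the claim becomes $f_i = 1+yS_i$. The crucial observation that will unlock the induction is the telescoping identity
$$S_{i-1}+S_i = f_0\cdots f_{i-1},$$
which is nothing more than a rearrangement of the recursion defining $S_i$.

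First I would verify the base cases $i=0$ and $i=1$ directly: $1+yS_0 = y+1 = f_0$ and $1+yS_1 = 1+y^2 = f_1$.

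For the inductive step with $i\geq 2$, I would substitute the inductive hypotheses $f_{i-1}=1+yS_{i-1}$ and $f_{i-2}=1+yS_{i-2}$ into the defining recurrence $f_i = f_{i-1}f_{i-2}+(f_{i-1}-1)(f_{i-1}-2)$ and expand. The linear terms $+yS_{i-1}$ coming out of $f_{i-1}f_{i-2}$ and $-yS_{i-1}$ coming out of $(f_{i-1}-1)(f_{i-1}-2)$ cancel, and after collecting I expect the expression to reduce to
$$f_i = 1+yS_{i-2}+y^2 S_{i-1}(S_{i-1}+S_{i-2}).$$
Applying the telescoping identity to $S_{i-1}+S_{i-2}$ and recognizing $1+yS_{i-1}=f_{i-1}$ converts the last term into $y(f_0\cdots f_{i-1} - f_0\cdots f_{i-2})$. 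A second use of the telescoping identity, now in the form $S_{i-2}-f_0\cdots f_{i-2}=-S_{i-1}$, then simplifies the whole expression to $f_i = 1+y(f_0\cdots f_{i-1} - S_{i-1}) = 1+yS_i$, completing the induction.

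The only conceptual step is noticing the telescoping identity; after that the inductive step is a short, purely mechanical verification, so I do not anticipate any real obstacle.
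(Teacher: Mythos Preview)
Your proof is correct and rests on the same telescoping identity as the paper's; the only difference is organizational. The paper defines $h_i$ by the right-hand side formula, notes that $h_i+h_{i-1}-2=yh_0\cdots h_{i-1}$ (your telescoping identity), and from this deduces that $h_i$ satisfies the defining recurrence of $f_i$, whereas you run the induction in the forward direction by substituting the hypothesis into the recurrence---both routes are the same computation read in opposite directions.
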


\begin{proof}
Temporarily define a sequence of polynomials $h_i$ in $\Z[y]$ by 
$$h_i=1+y(h_0\cdots h_{i-1}-h_0\cdots h_{i-2}+\cdots+(-1)^i).$$
We want to show that $h_i=f_i$ for all $i\geq 0$.
We have $h_0=y+1=f_0$ and $h_1=y^2+1=f_1$. It remains
to show that $h_i$ satisfies the recurrence relation that defines
$f_i$ for $i\geq 2$. We clearly have $h_i+h_{i-1}-2=yh_0\cdots h_{i-1}$.
Likewise, $h_{i-1}+h_{i-2}-2=yh_0\cdots h_{i-2}$. Therefore,
$h_i+h_{i-1}-2=h_{i-1}(h_{i-1}+h_{i-2}-2)$, which is equivalent
to the desired relation $h_i=h_{i-1}h_{i-2}+(h_{i-1}-1)(h_{i-1}-2)$.
So $h_i=f_i$ for all $i\geq 0$.
\end{proof}

The next polynomial sequence we will need is:

\begin{definition}
\label{epoly}
For each $i\geq 0$, define a polynomial $e_i$ in $\Z[y]$ by
$$e_i=yf_0\cdots f_{i-1}.$$
\end{definition}

For example, $e_0=y$, $e_1=y(y+1)=y^2+y$, and $e_2=y(y+1)(y^2+1)
=y^4+y^3+y^2+y$. By Lemma \ref{flem}, we have
$$e_i=f_i+f_{i-1}-2$$
for all $i\geq 1$, which can be viewed as an alternative
definition of $e_i$. We can also say that $e_i=f_{i-1}e_{i-1}$
for all $i\geq 1$. The polynomial $e_i$ has degree $2^i$
for each $i\geq 0$.

\begin{definition}
\label{bpoly}
For each $i\geq 0$, define a polynomial $b_i$ in $\Z[y]$ by
$b_0=1$ and
$$b_i=(-1)^i+f_{i-1}b_{i-1}$$
for $i\geq 1$.
\end{definition}

It follows by induction that
$$b_i=f_0\cdots f_{i-1}-f_1\cdots f_{i-1}+\cdots+(-1)^i$$
for all $i\geq 0$.
For example, $b_0=1$, $b_1=y$, and $b_2=y^3+y+1$ (which was the smallest
weight of the weighted projective space in Theorem \ref{kltr=3}).
The polynomial $b_i$
has degree $2^i-1$ for each $i\geq 0$.

\begin{definition}
\label{zpoly}
For each $i\geq 0$,
define a polynomial $z_i$ in $\Z[y]$ by
$z_0=y-1$, $z_1=y^2-y+1$, and 
$$z_i=e_{i-1}z_{i-1}+z_{i-2}$$
for all $i\geq 2$.
\end{definition}

For example, $z_2=y^4+2y-1$. The polynomial $z_i$
has degree $2^i$ for each $i\geq 0$. The following identity,
needed for Theorem \ref{kltgeneral}, relates the polynomial $z_i$
to $b_i$ and $f_i$, which may be considered simpler.

\begin{lemma}
\label{zrelation}
For every $i\geq 0$,
$$f_0\cdots f_{i-1}z_i=(-1)^{i+1}+b_i(f_i-1).$$
\end{lemma}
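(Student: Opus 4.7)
The plan is induction on $i$. Because the recurrence $z_i = e_{i-1}z_{i-1} + z_{i-2}$ is two-step, I use two base cases: at $i=0$ the statement reads $y-1 = -1 + (f_0-1)$, and at $i=1$ it reads $(y+1)(y^2-y+1) = 1 + y(f_1-1)$, both immediate from the definitions.

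For the inductive step, assume the identity at $i-1$ and $i-2$. Using $z_i = e_{i-1}z_{i-1} + z_{i-2}$ together with the factorizations $f_0\cdots f_{i-1} = f_{i-1}(f_0\cdots f_{i-2})$ and $f_0\cdots f_{i-2} = f_{i-2}(f_0\cdots f_{i-3})$, I write
$$f_0\cdots f_{i-1}\,z_i = f_{i-1}e_{i-1}\bigl[(f_0\cdots f_{i-2})z_{i-1}\bigr] + f_{i-1}f_{i-2}\bigl[(f_0\cdots f_{i-3})z_{i-2}\bigr]$$
and substitute the two inductive hypotheses into the bracketed products. The right-hand side of the desired identity I will rewrite using $b_i = (-1)^i + f_{i-1}b_{i-1}$ so that the goal becomes matching the coefficient of $b_{i-1}$ and the purely numerical (sign) contributions on each side.

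Three ingredients drive the simplification. First, the alternative description $e_{i-1} = f_{i-1}+f_{i-2}-2$ from Lemma \ref{flem}. Second, the rewritten $b$-recurrence $f_{i-2}b_{i-2} = b_{i-1} + (-1)^i$, which converts the $b_{i-2}$-term produced by the induction at $i-2$ into a $b_{i-1}$-term. Third, the algebraic identity $f_i - 1 = (f_{i-1}-1)^2 + f_{i-1}(f_{i-2}-1)$, which is immediate from the defining recurrence $f_i = f_{i-1}f_{i-2} + (f_{i-1}-1)(f_{i-1}-2)$. After these substitutions, the coefficients of $b_{i-1}$ collect into $b_{i-1}\bigl[e_{i-1}(f_{i-1}-1)+(f_{i-2}-1)\bigr]$, and the third ingredient identifies the bracket with $f_i - 1$, producing the $f_{i-1}b_{i-1}(f_i-1)$-piece of the target. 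The remaining sign terms assemble, using $e_{i-1}-f_{i-2} = f_{i-1}-2$, into $(-1)^i(f_i-1)+(-1)^{i+1}$, matching the $(-1)^i(f_i-1)$-piece of the target plus the leading $(-1)^{i+1}$.

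The main obstacle is the sign bookkeeping: each of the three ingredients contributes its own $(-1)^i$-dependent terms, and one must verify that they combine to exactly one surplus copy of $(-1)^{i+1}$ after the $b_{i-1}$-coefficients have been matched. Once those signs are controlled, the entire step reduces to the single polynomial identity $e_{i-1}(f_{i-1}-1)+(f_{i-2}-1) = f_i - 1$, which is a direct expansion, and the induction closes.
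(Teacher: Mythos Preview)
Your proof is correct and follows essentially the same approach as the paper: two base cases, then the two-step induction via $z_i=e_{i-1}z_{i-1}+z_{i-2}$, substituting the inductive hypotheses and simplifying with the $b$- and $e$-recurrences. The only difference is organizational---you separate the $b_{i-1}$-coefficient from the sign terms and close with the identity $e_{i-1}(f_{i-1}-1)+(f_{i-2}-1)=f_i-1$, whereas the paper subtracts the target and cancels using $e_i=f_i+f_{i-1}-2$ and $e_{i-1}=f_{i-1}+f_{i-2}-2$; these are equivalent rearrangements of the same algebra.
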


\begin{proof}
The lemma holds for $i=0$
(since $y-1=-1+1(y)$) and for $i=1$ (since $(y+1)(y^2-y+1)=1+y(y^2)$).
Now let $i\geq 2$ and assume the lemma
for smaller values of $i$.
Then the definition of $z_i$ gives that:
\begin{align*}
f_0\cdots f_{i-1}z_i{}= &\; f_0\cdots f_{i-1}(e_{r-1}z_{r-1}+z_{i-2})\\
{}= &\; (f_0\cdots f_{i-2}z_{i-1})(f_{i-1}e_{i-1})+(f_0\cdots f_{i-3}z_{i-2})
(f_{i-2}f_{i-1})\\
\begin{split}
{}= &\; \big[ (-1)^i+b_{i-1}(f_{i-1}-1)\big] e_i\\
& +\big[ (-1)^{i-1}+b_{i-2}(f_{i-2}-1)\big]
f_{i-2}f_{i-1},
\end{split}
\end{align*}
using that the lemma holds for smaller values
of $i$.
So the lemma holds for $i$ if 0 is equal to
\begin{align*}
& (-1)^{i+1}+b_i(f_i-1)-\big[ (-1)^i+b_{i-1}(f_{i-1}-1)\big] e_i\\
& -\big[ (-1)^{i-1}+b_{i-2}(f_{i-2}-1)\big] f_{i-2}f_{i-1}.
\end{align*}
By definition of $b_i$, we have $b_{i-1}=(-1)^{i-1}+b_{i-2}f_{i-2}$
and likewise $b_i=(-1)^i+b_{i-1}f_{i-1}$. So
we need to show that 0 is equal to
\begin{multline*}
\begin{aligned}
& \; \big[ (-1)^i+b_{i-1}f_{i-1}\big](-e_i+e_{i-1}-f_{i-2}+1)\\
+ & \;\big[ (-1)^{i-1}+b_{i-2}f_{i-2}\big](1-f_{i-2})f_{i-1}\\
+ & \; b_i(f_i-1)+b_{i-1}(f_{i-2}-1)f_{i-1}
+(-1)^i(f_{i-1}+f_{i-2}-e_{i-1}-2)
\end{aligned}\\
= b_i(-e_i+e_{i-1}-f_{i-2}+f_i)
+(-1)^i(f_{i-1}+f_{i-2}-e_{i-1}-2).
\end{multline*}
This is zero by the identities
$e_i=f_i+f_{i-1}-2$ and
$e_{i-1}=f_{i-1}+f_{i-2}-2$.
Lemma \ref{zrelation} is proved.
\end{proof}

The last sequence of polynomials we define is:

\begin{definition}
\label{dpoly}
For each $i\geq 0$, define a polynomial $d_i$ in $\Z[y]$ by
$$d_i=e_i+b_i(f_i-1).$$
\end{definition}

For example, $d_0 = 2y$,
$d_1 = y^3+y^2+y$, and $d_2 = y^7+y^6+y^5+4y^4+2y^3+2y^2+2y$
(which was the degree of the hypersurface in Theorem \ref{kltr=3}).
The polynomial $d_i$ has degree $2^{i+1}-1$ for each $i\geq 0$.
By Lemma \ref{zrelation}, another formula for $d_i$
is that $d_i = (-1)^i + f_0\cdots f_{i-1}(z_i+y)$.

\section{Better klt varieties with ample canonical class}

We now construct klt varieties $X$ with ample canonical class
and with smaller volume than in Theorem \ref{kltr=3}.
These should be close
to optimal in high dimensions. Indeed, we give examples
with $\log(\vol(K_X))$ asymptotic to $\log(\vol(K_Y+\Delta))$
as the dimension goes to infinity,
where $(Y,\Delta)$ is Koll\'ar's conjecturally optimal
klt pair from the introduction.

For any odd number $r\geq 3$
and any dimension $n\geq r-1$,
we give an example with weights chosen to satisfy
a cycle of $r$ congruences. For $r=3$, this is the example
in Theorem \ref{kltr=3}. For each odd $r\geq 3$,
our klt variety $X$ compares to
Koll\'ar's conjecturally optimal klt pair by 
$$\frac{\log(\vol(K_X))}{\log(\vol(K_Y+\Delta))}\to \frac{2^r-1}{2^r}$$
as $n$ goes to infinity. Thus, by increasing $r$ as $n$ increases,
we can make this ratio converge to 1.

The example given by Theorem \ref{kltgeneral} in dimension 4,
with $r=5$,
is a general hypersurface of degree $147565206676$ in
$$P^5(73782603338, 39714616165, 28421358181, 5458415771, 187980859, 232361).$$
Here $X$ has volume $\doteq 7.4\times 10^{-45}$. This is better
than the klt 4-fold given by Theorem \ref{kltr=3}, although
the smallest known volume
for a klt 4-fold with ample canonical class is about $1.4\times 10^{-47}$
\cite[ID 538926]{GRD}.

Let $c_0,c_1,\ldots$ be Sylvester's sequence;
see section \ref{kltsectr=3} for the properties of that sequence.
We also use the five sequences of polynomials $f_i$, $e_i$, $b_i$,
$z_i$ and $d_i$ in $\Z[y]$ from section \ref{poly}.

\begin{theorem}\label{kltgeneral}
Let $r$ be an odd integer at least 3 and let $n$ be an integer at least $r-1$.
Define integers $a_0,\ldots,a_{n+1}$ as follows.
Let $y=c_{n-r+2}-1$ and
\begin{align*}
a_{r-1}&=b_{r-1}\\
a_0&=d_{r-1}-(z_{r-1}+y)a_{r-1}\\
a_1&=d_{r-1}-f_0a_0\\
&\cdots\\
a_{r-2}&=d_{r-1}-f_{r-3}a_{r-3}\\
\end{align*}
These are positive integers. Let
$x=1+a_0+\cdots+a_{r-1}$; then
$d_{r-1}=yx=c_0\cdots c_{n-r+1}x$.
Let $a_{r+i}=c_0\cdots \widehat{c_i}\cdots c_{n-r+1}x$
for $0\leq i\leq n-r+1$. Let $X$ be a general hypersurface of degree $d_{r-1}$ in the complex weighted projective space $P(a_0,\ldots,a_{n+1})$.
Then $X$ is a klt projective variety of dimension $n$ with ample
canonical class, and
$$\vol(K_X)=\frac{1}{y^{n-r}x^{n-r+1}a_0\cdots a_{r-1}}.$$
It follows that $\vol(K_X)<\frac{1}{(c_{n-r+2}-1)^{(2^r-1)n-1}}$
and hence $\vol(K_X)<\frac{1}{2^{2^n}}$.
\end{theorem}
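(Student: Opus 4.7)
The plan is to follow the template of Theorem \ref{kltr=3} while leveraging the polynomial machinery of Section \ref{poly} to handle general odd $r\geq 3$. The first cluster of tasks are polynomial identities in $\Z[y]$. Positivity of $a_0,\ldots,a_{r-2}$ (given as successive differences $d_{r-1} - f_{i-1}a_{i-1}$) would be an inductive check: each $a_i$ is a polynomial in $y$ with positive leading coefficient whose value at $y = c_{n-r+2}-1\geq 2$ is positive. The identity $d_{r-1} = yx$ would follow from the formula $d_{r-1} = 1 + f_0\cdots f_{r-2}(z_{r-1}+y)$, obtained by combining Definition \ref{dpoly} with Lemma \ref{zrelation} at $i = r-1$ (using that $r$ is odd), together with telescoping the defining recurrences for the $a_i$. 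The companion identity $yx = c_0\cdots c_{n-r+1}x$ is immediate from the Sylvester relation $c_{m+1}-1 = c_0\cdots c_m$. Next, I would establish pairwise coprimality of $a_0,\ldots,a_{r-1}$ by proving the stronger statement that $1$ lies in the ideal $(a_i,a_j)\subset\Z[y]$ for every pair; evaluating at any integer $y$ then gives coprimality in $\Z$. Given pairwise coprimality, well-formedness of $Y = P(a_0,\ldots,a_{n+1})$ follows, since any prime dividing $n+1$ of the weights would have to divide two of $a_0,\ldots,a_{r-1}$.

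For quasi-smoothness of a general hypersurface $X$ of degree $d_{r-1}$ in $Y$, I would apply Lemma \ref{cycle} with the cycle $a_0,\ldots,a_{r-1}$. The divisibilities $a_j\mid d_{r-1}$ for $j\geq r$ are built into the definitions of the later weights. Of the $r$ required congruences, the $r-2$ relations $d_{r-1} - a_i\equiv 0 \pmod{a_{i-1}}$ for $1\leq i\leq r-2$ are tautological from $a_i = d_{r-1} - f_{i-1}a_{i-1}$; the congruence $d_{r-1} - a_0\equiv 0\pmod{a_{r-1}}$ follows directly from $d_{r-1}-a_0 = (z_{r-1}+y)a_{r-1}$; and the closing congruence $d_{r-1} - a_{r-1}\equiv 0\pmod{a_{r-2}}$ requires another application of Lemma \ref{zrelation} to produce an explicit factorization. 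Quasi-smoothness together with well-formedness then yields klt singularities and the adjunction $K_X = O_X(d_{r-1} - \sum a_i)$. Using $y = c_0\cdots c_{n-r+1}$ and the Sylvester identity $\sum_{i=0}^{m}1/c_i = 1 - 1/(c_{m+1}-1)$, one gets $\sum_{i=r}^{n+1} a_i = x - 1$, so $K_X = O_X(1)$; hence $K_X$ is ample and $\vol(K_X) = d_{r-1}/(a_0\cdots a_{n+1})$, which simplifies to the stated formula by collecting the repeated factors $c_j$ and $x$.

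For the volume bound, I would show by induction on $i$ that $a_i\in\Z[y]$ has degree $2^r - 1 - 2^i$ with positive leading coefficient and strictly positive lower-order contribution, so that $a_i(y) > y^{2^r-1-2^i}$; similarly $x(y) > y^{2^r-2}$. Multiplying and adding degrees gives $y^{n-r} x^{n-r+1} a_0\cdots a_{r-1} > y^{(2^r-1)n-1}$, which is the stated bound $\vol(K_X) < 1/(c_{n-r+2}-1)^{(2^r-1)n-1}$. The final inequality $\vol(K_X) < 1/2^{2^n}$ then follows from the asymptotic $c_i \doteq c^{2^{i+1}}$ with $c\doteq 1.264$, exactly as in Theorem \ref{kltr=3}. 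The main obstacle I anticipate is the combination of pairwise coprimality of $a_0,\ldots,a_{r-1}$ in $\Z[y]$ and the closing cycle congruence $d_{r-1}-a_{r-1}\equiv 0\pmod{a_{r-2}}$; both reduce to identities in $\Z[y]$, but the intricate interplay of the five recurrences $f_i$, $e_i$, $b_i$, $z_i$, $d_i$ makes the algebra delicate and likely requires induction on $r$ in the spirit of Lemma \ref{zrelation}.
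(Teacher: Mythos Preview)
Your outline follows the paper's template closely---cycle of congruences via Lemma \ref{cycle}, adjunction to get $K_X=O_X(1)$, Sylvester arithmetic for the later weights, and degree-counting for the volume bound---and those parts are fine. There is one arithmetic slip: the Sylvester identity gives $\sum_{i=r}^{n+1}a_i=(y-1)x=d_{r-1}-x$, not $x-1$; but since you draw the correct conclusion $K_X=O_X(1)$ this is clearly just a typo.

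The substantive divergence is in well-formedness. You propose to show that $1\in(a_i,a_j)\subset\Z[y]$ for \emph{every} pair $i,j<r$. The paper does not do this, and it is not clear that it holds (or that it is tractable) for general $r$: the recursion $a_{i+1}=d_{r-1}-f_ia_i$ only relates consecutive weights, so controlling $\gcd(a_i,a_j)$ for $|i-j|\geq 2$ would require unwinding a tower of substitutions in $\Z[y]$. Instead, the paper proves only what is needed, namely $\gcd(a_0,\ldots,\widehat{a_m},\ldots,a_{r-1})=1$ for each $m$, and it does so via an intermediate lemma you do not formulate: the alternative system
\[
a_i=e_i\bigl(1+a_{r-1}+\cdots+a_{i+1}\bigr)-a_{i+1},\qquad 0\leq i\leq r-2.
\]
This system is the workhorse of the paper's proof. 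It gives positivity immediately (since $e_i\geq 2$ for $y\geq 2$), it gives $d_{r-1}=yx$ by reading off the $i=0$ equation, and it drives the well-formedness argument: if a prime $p$ divides all $a_j$ with $j\neq m$, the $i=0$ equation forces $p\mid y(1+a_m)$, and then the $i=m-1$ equation (using $y\mid e_{m-1}$) forces $p\mid a_m$, contradicting $\gcd(a_0,\ldots,a_{r-1})=1$. The boundary cases $m=0,1$ are handled separately using the congruences $b_{r-1}\equiv 1\pmod{y}$ and $b_{r-1}\equiv -1\pmod{y+1}$ (which use that $r$ is odd and $\geq 3$).

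Your positivity argument (``positive leading coefficient, so positive at $y\geq 2$'') is also shakier than it looks: the leading terms of $d_{r-1}$ and $(z_{r-1}+y)a_{r-1}$ cancel in the definition of $a_0$, so you would need to track subleading terms carefully. The alternative system above sidesteps this entirely. I recommend replacing the pairwise-coprimality plan with this lemma; once it is in hand, the closing congruence $a_{r-1}=d_{r-1}-f_{r-2}a_{r-2}$ (which the paper proves first, directly from the alternating-sum formula for $b_{r-1}$) and the rest of your outline go through as written.
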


\begin{remark}
As mentioned in the introduction, Hacon-M\textsuperscript{c}Kernan-Xu
showed that for each positive integer
$n$, there is a constant $s_n$ such that for every klt projective
variety $X$ of dimension $n$
with $K_X$ ample, the linear system $|s_nK_X|$ gives a birational
embedding of $X$ into projective space \cite[Theorem 1.3]{HMXacc}.
Although we have emphasized the role of volume,
Theorem \ref{kltgeneral} shows that $s_n$ must also grow at least doubly
exponentially with $n$. Indeed, the variety $X$ has
$$H^0(X,mK_X)=0$$
for all $1\leq m<b_{r-1}$, where
$b_{r-1}\geq (c_{n-r+2}-1)^{2^{r-1}-1}$.
Taking $r=n+1$ if $n$ is even and $r=n$ if $n$ is odd, we deduce that
the bottom weight $b_{r-1}$ is at least $2^{2^n-1}$ if $n$ is even
and at least $6^{2^{n-1}-1}$ if $n$ is odd.
\end{remark}

\begin{proof}
For any positive integer $r$ and $n\geq r-1$,
define $a_0,\ldots,a_{r-1}$ as in the theorem. We start by proving
various identities that we need, leading up to the proof
that $d_{r-1}=yx$. We only introduce the assumption that $r$ is odd
and at least 3
when we prove that $Y$ is well-formed.

A first step is to show that $a_{r-1}=d_{r-1}-f_{r-2}a_{r-2}$
if $r\geq 2$.
By section \ref{poly},
we have $d_{r-1}=(-1)^{r-1}+f_0\cdots f_{r-2}(z_{r-1}+y)$.
Multiplying by $(-1)^{r-1}a_{r-1}$ gives that
\begin{align*}
a_{r-1}={}& (-1)^{r-1}d_{r-1}a_{r-1}
+(-1)^rf_0\cdots f_{r-2}(z_{r-1}+y)a_{r-1}\\
\begin{split}
={}& d_{r-1}\big[ 1-f_{r-2}+f_{r-3}f_{r-2}-\cdots
+(-1)^{r-1}f_0\cdots f_{r-2}\big]\\
& +(-1)^rf_0\cdots f_{r-2}(z_{r-1}+y)a_{r-1},
\end{split}
\end{align*}
using a formula for $a_{r-1}=b_{r-1}$ from section \ref{poly}.
By definition of $a_0$, this gives that
\begin{multline*}
a_{r-1}=d_{r-1}\big[ 1-f_{r-2}+f_{r-3}f_{r-2}-\cdots
+(-1)^{r-2}f_1\cdots f_{r-2}\big]\\
+(-1)^{r-1}f_0\cdots f_{r-2}a_0.
\end{multline*}
Now successively apply the definitions of $a_1$, $a_2$, and so on,
giving
\begin{align*}
&\cdots\\
a_{r-1}&=d_{r-1}(1-f_{r-2}+f_{r-3}f_{r-2})-f_{r-4}f_{r-3}f_{r-2}a_{r-4}\\
&=d_{r-1}(1-f_{r-2})+f_{r-3}f_{r-2}a_{r-3}\\
&=d_{r-1}-f_{r-2}a_{r-2}.
\end{align*}
That is what we wanted.

\begin{lemma}
\label{equations}
The following $r$ equations hold.
\begin{align*}
a_{r-1}&=b_{r-1}\\
a_{r-2}&=e_{r-2}(1+a_{r-1})-a_{r-1}\\
a_{r-3}&=e_{r-3}(1+a_{r-1}+a_{r-2})-a_{r-2}\\
&\cdots\\
a_0&=e_0(1+a_{r-1}+\cdots+a_1)-a_1.
\end{align*}
\end{lemma}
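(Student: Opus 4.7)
The plan is to establish the $r$ equations by reverse induction on $j$, using as input the cyclic chain of relations
\[ d_{r-1} = a_{j+1} + f_j a_j \qquad (j = 0, 1, \ldots, r-2), \]
which combines the defining equations $a_j = d_{r-1} - f_{j-1} a_{j-1}$ (for $1 \leq j \leq r-2$) with the identity $a_{r-1} = d_{r-1} - f_{r-2} a_{r-2}$ derived in the paragraph just before the lemma. Set $T_j = 1 + a_{j+1} + \cdots + a_{r-1}$, so that what I want to prove is
\[ a_j + a_{j+1} = e_j T_j \qquad (0 \leq j \leq r-2). \]

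For the base case $j = r-2$, solving $a_{r-1} = d_{r-1} - f_{r-2}a_{r-2}$ for $a_{r-2}$ gives
\[ a_{r-2} + a_{r-1} = \frac{d_{r-1} + (f_{r-2}-1)\,a_{r-1}}{f_{r-2}}. \]
Multiplying the target equation through by $f_{r-2}$ and using $f_{r-2} e_{r-2} = e_{r-1}$ (the alternative description of $e_i$ from Section \ref{poly}), the statement reduces to
\[ d_{r-1} - e_{r-1} = (e_{r-1} - f_{r-2} + 1)\,a_{r-1}. \]
By Definition \ref{dpoly}, the left side is $b_{r-1}(f_{r-1}-1) = a_{r-1}(f_{r-1}-1)$, and the right side equals the same quantity via the identity $e_{r-1} = f_{r-1} + f_{r-2} - 2$ recorded after Definition \ref{epoly}.

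For the inductive step ($0 \leq j \leq r-3$), assume $a_{j+1} + a_{j+2} = e_{j+1} T_{j+1}$. Substituting $a_j = (d_{r-1}-a_{j+1})/f_j$ into $a_j + a_{j+1}$, multiplying through by $f_j$, and using both $f_j e_j = e_{j+1}$ and $e_{j+1} - f_j + 1 = f_{j+1} - 1$ reduce the desired identity to
\[ d_{r-1} = e_{j+1} T_{j+1} + (f_{j+1}-1)\,a_{j+1}. \]
Applying the inductive hypothesis $e_{j+1} T_{j+1} = a_{j+1} + a_{j+2}$ turns this into $d_{r-1} = f_{j+1}a_{j+1} + a_{j+2}$, which is exactly the $(j+1)$st relation in the cyclic chain.

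The main obstacle was already dispatched before the lemma: obtaining the ``missing link'' $a_{r-1} = d_{r-1} - f_{r-2}a_{r-2}$ required Lemma \ref{zrelation} and the full definition of $a_0$ involving $z_{r-1}+y$. Once that identity is available, the rest of Lemma \ref{equations} is a clean two-step reverse induction whose only nontrivial ingredients are the elementary polynomial identities $e_{i+1} = f_i e_i$ and $e_i = f_i + f_{i-1} - 2$ from Section \ref{poly}.
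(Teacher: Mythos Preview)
Your proof is correct and follows essentially the same route as the paper's: both arguments run a reverse induction from $j=r-2$ down to $j=0$, clear the factor $f_j$ by multiplying through, invoke the identities $e_{j+1}=f_je_j$ and $e_{j+1}=f_{j+1}+f_j-2$, and reduce to the cyclic relation $d_{r-1}=f_{j+1}a_{j+1}+a_{j+2}$, with the previously established link $a_{r-1}=d_{r-1}-f_{r-2}a_{r-2}$ supplying the base case. Your introduction of $T_j$ and the reformulation $a_j+a_{j+1}=e_jT_j$ streamline the bookkeeping, but the underlying argument is the same.
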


The integer $y$ is at least 2. Given that, Lemma \ref{equations} shows
that the $a_i$'s are positive integers.
We will also use it to prove the identity
$d_{r-1}=yx$, which is important for Theorem \ref{kltgeneral}.

\begin{proof}
(Lemma \ref{equations})
The first equation,
$a_{r-1}=b_{r-1}$, holds by definition of $a_{r-1}$.
Next, if $r\geq 2$,
we want to show that $a_{r-2}=e_{r-2}(1+a_{r-1})-a_{r-1}$.
It suffices to prove this after multiplying by $f_{r-2}$,
which we do in order to use the result above
that $a_{r-1}=d_{r-1}-f_{r-2}a_{r-2}$.
So we want to show that 0 is equal to
\begin{align*}
{}&\; f_{r-2}e_{r-2}(1+a_{r-1})-f_{r-2}a_{r-1}-(d_{r-1}-a_{r-1})\\
{}=&\; a_{r-1}(f_{r-2}e_{r-2}-f_{r-2}+1)+(f_{r-2}e_{r-2}-d_{r-1}).\\
{}=&\; b_{r-1}(f_{r-1}-1)+(e_{r-1}-d_{r-1}),
\end{align*}
where we used the identities that $e_{r-1}=f_{r-1}+f_{r-2}-2$
and $e_{r-1}=f_{r-2}e_{r-2}$ from section \ref{poly}. By definition,
$d_{r-1}=e_{r-1}+b_{r-1}(f_{r-1}-1)$, and so
the desired equation holds. So we have
$a_{r-2}=e_{r-2}(1+a_{r-1})-a_{r-1}$.

Now suppose we have proved the equation in Lemma \ref{equations}
for $a_{i+1}$,
with $0\leq i\leq r-3$; let us prove it for $a_i$.
That is, we want to show that $a_i=e_i(1+a_{r-1}+\cdots+a_{i+1})-a_{i+1}$.
By definition, $a_{i+1}=d_{r-1}-f_ia_i$, and so $f_ia_i=d_{r-1}-a_{i+1}$.
It suffices to prove
the desired identity after multiplying by $f_i$; so we want to show
that 0 is equal to
\begin{align*}
&\; f_ie_i(1+a_{r-1}+\cdots+a_{i+1})-f_ia_{i+1}-(d_{r-1}-a_{i+1})\\
=&\; a_{i+1}(f_ie_i-f_i+1)+f_ie_i(1+a_{r-1}+\cdots+a_{i+2})-d_{r-1}\\
=&\; a_{i+1}(f_{i+1}-1)+e_{i+1}(1+a_{r-1}+\cdots+a_{i+2})-d_{r-1},
\end{align*}
using the identities $e_{i+1}=f_{i+1}+f_i-2$ and $e_{i+1}=f_ie_i$
from section \ref{poly}.

By induction, we know that $a_{i+1}=e_{i+1}(1+a_{r-1}+\cdots+a_{i+2})-a_{i+2}$.
So we want to show that 0 is equal to
$$a_{i+1}f_{i+1}+a_{i+2}-d_{r-1},$$
which is true by definition of $a_{i+2}$ (or, in the case $i=r-3$,
by the equality $a_{r-1}=d_{r-1}-a_{r-2}f_{r-2}$ which we proved).
That completes the proof of Lemma \ref{equations}.
\end{proof}

We return to the proof of Theorem \ref{kltgeneral}.
From Lemma \ref{equations}, it is clear
that $a_0,\ldots,a_{r-1}$ are positive integers.
Writing $x=1+a_0+\cdots+a_{r-1}$, let us show that $d_{r-1}=yx$,
part of the statement of the theorem.
If $r=1$, then $a_0=b_0=1$ and $d_{r-1}=2y$, so $x=1+a_0=2$
and we see that $d_{r-1}=yx$. If $r\geq 2$,
then Lemma \ref{equations} says that $a_0=e_0(1+a_{r-1}+\cdots+a_1)-a_1$,
where $e_0=y$. Equivalently, $x=1+a_0+\cdots+a_r$ satisfies
$a_1=yx-(y+1)a_0$. Here $a_1=d_{r-1}-(y+1)a_0$, by definition if $r\geq 3$
and by the formula $a_{r-1}=d_{r-1}-f_{r-2}a_{r-2}$ shown above
if $r=2$.
We conclude that $d_{r-1}=yx$.
Since $y=c_{n-r+2}-1=c_0\cdots c_{n-r+1}$ by the properties
of Sylvester's sequence, we can also say
that $d_{r-1}=c_0\cdots c_{n-r+1}x$.

As in the statement of the theorem, define
$a_{r+i}=c_0\cdots \widehat{c_i}\cdots c_{n-r+1}x$
for $0\leq i\leq n-r+1$.
We now show that the weighted projective space
$Y=P(a_0,\ldots,a_{n+1})$ is well-formed,
when $r$ is odd and at least 3.
That is, we have to show
that $\gcd(a_0,\ldots,\widehat{a_m},\ldots,a_{n+1})=1$
for each $0\leq m\leq n+1$.
It suffices to show that
$\gcd(a_0,\ldots,\widehat{a_m},\ldots,a_{r-1})=1$
for each $0\leq m\leq r-1$.

We first compute some of section \ref{poly}'s polynomial sequences
modulo $y$.
By induction on $i$, we have $f_i\equiv 1\pmod{y}$
for all $i\geq 0$. It follows that
$$b_i\equiv\begin{cases}
1\pmod{y}&\text{if }i\text{ is even}\\
0\pmod{y}&\text{if }i\text{ is odd.}
\end{cases}$$
Also by induction, we find that
$$f_i\equiv\begin{cases}
0\pmod{y+1}&\text{if }i\text{ is even}\\
2\pmod{y+1}&\text{if }i\text{ is odd,}
\end{cases}$$
and hence
$$b_i\equiv\begin{cases}
1\pmod{y+1}&\text{if }i=0\\
-1\pmod{y+1}&\text{if }i>0.
\end{cases}$$

From there, we can show that $\gcd(a_0,\ldots,a_{r-1})=1$,
a step towards our goal. Namely, if a prime number $p$
divides $a_j$ for all $0\leq j\leq r-1$,
then the formula for $a_0$ from Lemma \ref{equations}
shows that $p$ divides $e_0=y$.
But $a_{r-1}=b_{r-1}\equiv 1\pmod{y}$ (using that $r$ is odd),
contradicting that $p$ divides $a_{r-1}$. So we have shown
that $\gcd(a_0,\ldots,a_{r-1})=1$.

Using that, let us show that
$\gcd(a_0,\ldots,\widehat{a_m},\ldots,a_{r-1})=1$
for each $2\leq m\leq r-1$. (We handle the cases where
$m$ is 0 or 1 afterward.)
Let $p$ be a prime number that divides $a_j$
for all $0\leq j\leq r-1$ with $j\neq m$. The formula
for $a_0$ from Lemma \ref{equations} gives that $0\equiv y(1+a_m)\pmod{p}$.
As a result, the formula for $a_{m-1}$ from the lemma gives that
$0\equiv e_{m-1}(1+a_m)-a_m\equiv -a_m\pmod{p}$,
using that $e_{m-1}$ is a multiple of $y$. This contradicts
the fact that $\gcd(a_0,\ldots,a_{r-1})=1$.

Next, we show that $\gcd(a_0,a_2,\ldots,a_{r-1})=1$.
Let $p$ be a prime number that divides $a_0$ as well
as $a_j$ for all $2\leq j\leq r-1$.
The formula for $a_0$ from Lemma \ref{equations} gives that
$0\equiv y(1+a_1)-a_1\equiv y+(y-1)a_1
\pmod{p}$. The formula for $a_1$ from the lemma gives that
$a_1\equiv e_1=y(y+1)\pmod{p}$. Combining these,
we have $0\equiv y+(y-1)y(y+1)=y^3\pmod{p}$. So $p$ divides $y$.
But then $a_1\equiv 0\pmod{p}$, contradicting
that $\gcd(a_0,\ldots,a_{r-1})=1$.

Finally, we show that $\gcd(a_1,\ldots,a_{r-1})=1$.
Let $p$ be a prime number that divides $a_j$ for all
$1\leq j\leq r-1$. By the formula for $a_1$ from Lemma \ref{equations},
$p$ divides $e_1=y(y+1)$. If $p$ divides $y$ ($=e_0$), then the formula
for $a_0$ from the lemma gives that $p$ divides $a_0$,
contradicting that $\gcd(a_0,\ldots,a_{r-1})=1$.
So $p$ divides $y+1$. But $a_{r-1}=b_{r-1}\equiv -1\pmod{y+1}$
since $r$ is at least 3,
contradicting that $p$ divides $a_{r-1}$.
This completes the proof
that $\gcd(a_0,\ldots,\widehat{a_m},\ldots,a_{r-1})=1$
for each $0\leq m\leq r-1$. So $Y$ is well-formed.

Next, let us show that the general hypersurface $X$ of degree $d_{r-1}$
in $Y$ is quasi-smooth.
For each $i>r-1$, we know that $a_i$ divides $d_{r-1}$; also, $d_{r-1}$
is greater than each $a_i$. (For $a_0,\ldots,a_{r-1}$, that follows
from the fact that $d_{r-1}=y(1+a_0+\cdots+a_{r-1})$.)
Given this, Lemma \ref{cycle} shows that quasi-smoothness follows
from a cycle of $r$ congruences, namely that
$d_{r-1}-a_{r-1}\equiv 0\pmod{a_{r-2}}$,
$d_{r-1}-a_{r-2}\equiv 0\pmod{a_{r-3}}$,
\ldots, $d_{r-1}-a_1\equiv 0\pmod{a_0}$,
and $d_{r-1}-a_0\equiv 0\pmod{a_{r-1}}$.
These are immediate from the definitions of $a_i$, together with
the identity $a_{r-1}=d_{r-1}-f_{r-2}a_{r-2}$ which we proved.
So $X$ is quasi-smooth.
In particular, $X$ has only cyclic quotient singularities,
and so $X$ is klt.

Therefore, $K_X=O_X(d_{r-1}-\sum a_i)$. Here
\begin{align*}d_{r-1}-\sum_{i=r}^{n+1}a_i
&=c_0\cdots c_{n-r+1}x-\sum_{i=r}^{n+1}a_i\\
&=c_0\cdots c_{n-r+1}\bigg( 1-\sum_{i=0}^{n-r+1}1/c_i\bigg) x\\
&=x\\
&=1+a_0+\cdots+a_{r-1},
\end{align*}
and so $K_X=O_X(1)$. As a result,
\begin{align*}
\vol(K_X)&=\frac{d_{r-1}}{a_0\cdots a_{n+1}}\\
&=\frac{(c_0\cdots c_{n-r+1})x}
{(c_0\cdots c_{n-r+1})^{n-r+1}x^{n-r+2}
a_0\cdots a_{r-1}}\\
&=\frac{1}{y^{n-r}x^{n-r+1}a_0\cdots a_{r-1}}.
\end{align*}

In terms of $y=c_{n-r+2}-1$, we have $a_{r-1}>y^{2^{r-1}-1}$.
Use the $r$ equations from Lemma \ref{equations}
to estimate the other $a_i$'s.
By descending induction on $i$,
using that $e_j\geq y^{2^j}$ for each $j$, it follows that 
$a_i>y^{2^r-2^i-1}$ for $0\leq i\leq r-1$.
Therefore, $x\geq a_0>y^{2^r-2}$.
It follows that
$\vol(K_X)<1/y^{(2^r-1)n-1}=1/(c_{n-r+2}-1)^{(2^r-1)n-1}$.

There is a constant $c\doteq 1.264$ such that $c_i$ is the closest
integer to $c^{2^{i+1}}$ for all $i\geq 0$ \cite[equations
2.87 and 2.89]{GK}.
This implies the crude statement that $\vol(K_X)< 1/2^{2^n}$
for all $n\geq r-1$.
\end{proof}

\section{Klt Fano varieties with $H^0(X,-mK_X)=0$
for a large range of positive integers $m$}

We now construct klt Fano varieties such that $H^0(X,-mK_X)=0$
for a large range of positive integers $m$ (Theorem \ref{kltfano}).
This is of interest
in connection with Birkar's theorem on the boundedness of complements.
Namely, for each positive integer $n$, there is a positive integer $e=e_n$
such that for every klt Fano variety $X$ of dimension $n$,
the linear system $|-eK_X|$ is not empty, and in fact it contains
a divisor $M$ with mild singularities in the sense that the pair
$(X,\frac{1}{e}M)$
is log canonical \cite[Theorem 1.1]{Birkarcomp}.
Our examples show that $e_n$ must grow
at least doubly exponentially, roughly like $2^{2^n}$.

In low dimensions, our examples are good but not optimal.
In dimension 2, Theorem \ref{kltfano} gives
the klt Fano surface of degree 256 in $P^3(128,69,49,11)$.
The optimal bottom weight (for quasi-smooth hypersurfaces
of dimension 2 with $K_X=O_X(-1)$) is 13, which occurs in the examples
$X_{256}\subset P^3(128,81,35,13)$
and $X_{127}\subset P^3(57,35,23,13)$. In dimension 3,
Theorem \ref{kltfano} gives the klt Fano 3-fold
$$X_{336960}\subset P^4(168480,112320,46837,9101,223),$$
which has $K_X=O_X(-1)$.
The optimal bottom weight here is 407,
from Johnson and Koll\'ar's klt Fano 3-fold \cite[Remark 3]{JK}:
$$X_{37584}\subset P^4(18792,12528,5311,547,407).$$
So Theorem \ref{kltfano} has excellent asymptotics
in high dimensions, but it is not optimal.

Our klt Fano varieties also have fairly small volume of $-K_X$; but that
has no particular significance, because
the volume of klt Fano varieties in a given dimension
can be arbitrarily small. (For example, for any positive integer $a$,
the weighted projective plane $Y=P(2a+1,2a,2a-1)$
is a klt Fano surface with $\vol(-K_Y)=18a/(4a^2-1)$.)

The definition of our klt Fano varieties is much like that of
the klt varieties of general type in Theorem \ref{kltgeneral}.
Again,
let $c_0,c_1,\ldots$ be Sylvester's sequence;
see section \ref{kltsectr=3} for the properties of that sequence.
We use the five sequences of polynomials $f_i$, $e_i$, $b_i$,
$z_i$ and $d_i$ in $\Z[y]$ from section \ref{poly}.
The one slightly different polynomial we need here is $\d_i:=-e_i+b_i(f_i-1)$,
in place of $d_i=e_i+b_i(f_i-1)$.

\begin{theorem}\label{kltfano}
Let $r$ be an odd integer at least 3 and let $n$ be an integer at least $r-1$.
Define integers $a_0,\ldots,a_{n+1}$ as follows.
Let $y=c_{n-r+2}-1$ and
\begin{align*}
a_{r-1}&=b_{r-1}\\
a_0&=\d_{r-1}-(z_{r-1}-y)a_{r-1}\\
a_1&=\d_{r-1}-f_0a_0\\
&\cdots\\
a_{r-2}&=\d_{r-1}-f_{r-3}a_{r-3}\\
\end{align*}
These are positive integers. Let
$x=-1+a_0+\cdots+a_{r-1}$; then
$\d_{r-1}=yx=c_0\cdots c_{n-r+1}x$.
Let $a_{r+i}=c_0\cdots \widehat{c_i}\cdots c_{n-r+1}x$
for $0\leq i\leq n-r+1$. Let $X$ be a general hypersurface
of degree $\d_{r-1}$
in the complex weighted projective space $P(a_0,\ldots,a_{n+1})$.
Then $X$ is a klt Fano variety of dimension $n$, and
$$H^0(X,-mK_X)=0$$
for all $1\leq m < b_{r-1}$.
Here $b_{r-1}\geq (c_{n-r+2}-1)^{2^{r-1}-1}$.
\end{theorem}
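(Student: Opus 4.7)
The plan is to mirror the proof of Theorem \ref{kltgeneral} with systematic sign changes dictated by the replacements $d_{r-1}\to \d_{r-1}$, $z_{r-1}+y\to z_{r-1}-y$, and $x=1+\sum a_i\to x=-1+\sum a_i$. First I would note that Lemma \ref{zrelation} gives $\d_{r-1}=-e_{r-1}+b_{r-1}(f_{r-1}-1)=(-1)^{r-1}+f_0\cdots f_{r-2}(z_{r-1}-y)$, the formal analog of the identity used for $d_{r-1}$. Multiplying this by $(-1)^{r-1}a_{r-1}$, expanding $a_{r-1}=b_{r-1}$ via its alternating closed form from section \ref{poly}, and successively substituting the defining equations for $a_0,a_1,\ldots,a_{r-2}$, the same telescoping cascade as in Theorem \ref{kltgeneral} will yield $a_{r-1}=\d_{r-1}-f_{r-2}a_{r-2}$. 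An identical induction then produces the ``minus one'' analog of Lemma \ref{equations}, $a_i=e_i(-1+a_{r-1}+\cdots+a_{i+1})-a_{i+1}$ for $0\leq i\leq r-2$. Since $y\geq 2$ and $a_{r-1}=b_{r-1}\geq 1$, these force the $a_i$'s to be positive integers, and reading off the $i=0$ case gives $\d_{r-1}=yx$.

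I would then prove well-formedness of $Y=P(a_0,\ldots,a_{n+1})$ and quasi-smoothness of $X$ by the same arguments as in Theorem \ref{kltgeneral}. The congruences of $f_i$ and $b_i$ modulo $y$ and modulo $y+1$ are unchanged, and the gcd contradictions carry over once each occurrence of $1+a_m$ is replaced by $-1+a_m$, because the essential step only uses $y\mid e_{m-1}$ to force $a_m\equiv 0$ modulo any prospective common divisor. The cycle of $r$ congruences required by Lemma \ref{cycle} is immediate from the recursive definitions of the $a_i$ together with the identity $a_{r-1}=\d_{r-1}-f_{r-2}a_{r-2}$, while $\d_{r-1}\geq a_i$ for every $i$ follows from $\d_{r-1}=y(-1+\sum a_j)$ using $y\geq 2$ and $r\geq 3$. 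Hence $X$ has only cyclic quotient singularities and is klt.

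By the adjunction formula, $K_X=O_X(\d_{r-1}-\sum_i a_i)$. Using $y=c_0\cdots c_{n-r+1}$ and $\sum_{i=0}^{n-r+1}1/c_i=1-1/y$, the tail evaluates to $\sum_{i=r}^{n+1}a_i=(y-1)x$, so $\d_{r-1}-\sum_{i=r}^{n+1}a_i=yx-(y-1)x=x$, and subtracting the remaining weights gives $K_X=O_X(x-(a_0+\cdots+a_{r-1}))=O_X(-1)$. Hence $X$ is Fano. For the vanishing, $-mK_X=O_X(m)$ and for $m<\d_{r-1}$ the space $H^0(X,O_X(m))$ coincides with the weighted-degree-$m$ part of $\C[x_0,\ldots,x_{n+1}]$, which vanishes whenever $m$ is smaller than every weight $a_i$. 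I would then verify that the minimum weight is $a_{r-1}=b_{r-1}$: the analog of Lemma \ref{equations} gives $a_{r-2}=(e_{r-2}-1)b_{r-1}-e_{r-2}\gg b_{r-1}$, so by descending induction each $a_i$ for $0\leq i\leq r-2$ exceeds $b_{r-1}$, and each $a_{r+k}=\d_{r-1}/c_k\geq x>b_{r-1}$. This gives $H^0(X,-mK_X)=0$ for $1\leq m<b_{r-1}$, and the bound $b_{r-1}\geq y^{2^{r-1}-1}$ follows from the closed form of $b_{r-1}$ in section \ref{poly}: its leading term $f_0\cdots f_{r-2}$ has degree $2^{r-1}-1$ and coefficient $1$, dominating the subsequent alternating partial products for $y\geq 2$.

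The main obstacle I anticipate is the careful bookkeeping around the sign flips, particularly in the well-formedness argument where each mod-$y$ and mod-$(y+1)$ reduction of the modified recursion must still trap every prospective common prime divisor; once the signs are tracked, the rest of the proof runs in close parallel with Theorem \ref{kltgeneral}.
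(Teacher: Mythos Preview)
Your proposal is correct and takes essentially the same approach as the paper, which proves Theorem \ref{kltfano} by mirroring Theorem \ref{kltgeneral} with exactly the sign changes you describe, recording the identity $\d_i=(-1)^i+f_0\cdots f_{i-1}(z_i-y)$, the relation $a_{r-1}=\d_{r-1}-f_{r-2}a_{r-2}$, and the analog of Lemma \ref{equations} with $-1$ in place of $+1$. One small fix: positivity of $a_{r-2}=e_{r-2}(b_{r-1}-1)-b_{r-1}$ requires $b_{r-1}>1$, not merely $b_{r-1}\geq 1$ as you wrote, but this is immediate from $r\geq 3$ since then $b_{r-1}\geq b_2=y^3+y+1\geq 11$.
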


Taking $r=n+1$ if $n$ is even and $r=n$ if $n$ is odd, we deduce that
the bottom weight $b_{r-1}$ is at least $2^{2^n-1}$ if $n$ is even
and at least $6^{2^{n-1}-1}$ if $n$ is odd.

\begin{proof}
The proof is identical to that of Theorem \ref{kltgeneral},
with sign changes where needed. For example, in place of the identity
$d_i=(-1)^i+f_0\cdots f_{i-1}(z_i+y)$, use that
$\d_i=(-1)^i+f_0\cdots f_{i-1}(z_i-y)$. As in the proof
of Theorem \ref{kltgeneral}, start by showing
that $a_{r-1}=\d_{r-1}-f_{r-2}a_{r-2}$. The analog of Lemma \ref{equations}
says that
\begin{align*}
a_{r-1}&=b_{r-1}\\
a_{r-2}&=e_{r-2}(-1+a_{r-1})-a_{r-1}\\
a_{r-3}&=e_{r-3}(-1+a_{r-1}+a_{r-2})-a_{r-2}\\
&\cdots\\
a_0&=e_0(-1+a_{r-1}+\cdots+a_1)-a_1.
\end{align*}
That makes it clear that the $a_i$'s are positive integers.
The rest of the proof shows
that $X$ is a well-formed quasi-smooth hypersurface with $K_X=O_X(-1)$,
and its bottom weight is $b_{r-1}$.
\end{proof}


\small \sc UCLA Mathematics Department, Box 951555,
Los Angeles, CA 90095-1555

totaro@math.ucla.edu

chwang@math.ucla.edu
\end{document}